\DeclareFontFamily{OT1}{pzc}{}
\DeclareFontShape{OT1}{pzc}{m}{it}{<-> s * [1.10] pzcmi7t}{}
\DeclareMathAlphabet{\mathpzc}{OT1}{pzc}{m}{it}
\definecolor{Red}{cmyk}{0,1,1,0.2}
\newcommand{\N}{\mathbb N}
\newcommand{\R}{\mathbb R}
\def\R{\mathbb R}
\def\N{\mathbb N}
\def\E{\mathbb E}
\def\P{\mathbb P}
\def\ep{\epsilon}
\def\dk{{\bf d}_1}
\newcommand{\be}{\begin{equation}}
\newcommand{\ee}{\end{equation}}
\def\1{{\bf 1}}
\def\dive{{\rm div}}
\def\Pw{{\mathcal P}_2(\R^d)}
\def\Pk{{\mathcal P}_1(\R^d)}
\def\ds{\displaystyle}
\newtheorem{Theorem}{Theorem}[section]
\newtheorem{Definition}[Theorem]{Definition}
\newtheorem{Proposition}[Theorem]{Proposition}
\newtheorem{Lemma}[Theorem]{Lemma}
\begin{document}

%\pagestyle{plain}
%\title{MFGf\\ (Draft)}

\title{Remarks on Nash equilibria in mean field game models with a major player}
\author{P.~Cardaliaguet\thanks{Universit\'e Paris-Dauphine, PSL Research University, CNRS, Ceremade, 75016 Paris, France - cardaliaguet@ceremade.dauphine.fr } \and M.~Cirant\thanks{Dipartimento di Matematica "Tullio Levi-Civita", Università di Padova via Trieste 63, 35121 Padova (Italy) - cirant@math.unipd.it} \and A.~Porretta\thanks{Dipartimento di Matematica, Universit\`a di Roma ``Tor Vergata'', Via della Ricerca Scientifica 1, 00133 Roma, Italy - porretta@mat.uniroma2.it}}

\maketitle

\begin{abstract} For a mean field game model with a major and infinite minor players, we characterize a notion of Nash equilibrium via a system of so-called master equations, namely a system of nonlinear transport equations in the space of measures. Then, for games with a finite number~$N$ of minor players and a major player, we prove that the solution of
the corresponding Nash system converges to the solution of the system of master equations as $N$ tends to infinity.
\end{abstract}

%\tableofcontents

%\bigskip

\section*{Introduction}

The aim of this note is to discuss the model of a mean field game (MFG) with a major and many minor players. Let us recall that MFGs describe Nash equilibrium configurations in differential games with infinitely many small players. 

MFG problems with a major player are differential games in which infinitely many small players interact with a major one. This class of problems  was first introduced by Huang in \cite{H10} and later studied in various forms and  different frameworks in several papers \cite{BCY15, BCY16, BLP14, CK14, CDbook, CW16, CW17, CZ16, EMP, FC, KP, LL18, MB15, NH12, NC13, SC16}. In the literature, the notion of solution is often that of Nash equilibria and, in the present paper, we will concentrate on this notion of solution. Let us point out however that this is not always the case in the above quoted references: for instance, \cite{BCY15, BCY16} (see also \cite{MB15}) study Stackelberg equilibria (in which the major player announces in advance his strategy); \cite{EMP} deals with the (closely related) problem of principal-agents; \cite{BLP14} considers the situation where the small agents cooperate to play a zero-sum game against the major player. 

Concerning  Nash equilibria in the framework of MFG with a major player, Carmona and  Zhu \cite{CZ16} point out that the notion is rather subtle and not  trivial at all. In fact, \cite{CZ16} (and, subsequently,  \cite{CDbook} and \cite{CW17}) propose  a notion of Nash equilibrium which differs from the classical construction of \cite{NH12, NC13} (see Proposition 6.2 of \cite{CZ16}).  In the same papers, it is proved by the authors that this definition yields $\varepsilon$-Nash equilibria for the $N+1$-players' game in the linear quadratic case. Very recently, Lasry and Lions \cite{LL18}  introduce a new equation (the master equation with a major player, see equation \eqref{eq.MasterMM} below) which could give rise to yet another notion of solution. Finally, one could also wonder if the limit of games with finitely many players (including a major player) as  the number of small players tends to infinity would not give rise again to a different notion of solution. The purpose of this paper is to show that the approach by Carmona and al.  \cite{CDbook, CW17, CZ16}, the master equation of Lasry and Lions \cite{LL18} and the limit of Nash equilibria, as the number of small players tends to infinity, lead to the same  Nash equilibria.

To explain our result, let us start with the differential game with $N$ minor players and one major player, which we describe through the following PDE system; the value functions associated with the  $N$ minor players are denoted by $u^{N,i}$ ($i=1,\dots,N$) while the value function of the major player is $u^{N,0}$. To simplify the discussion, we assume that the players have only individual noises. The Nash system reads (in $(0,T)\times \R^{d_0+Nd}$) (see Section \ref{sec.not} below for the notations): 
\be \label{NashSyst}
\left\{\begin{array}{l}
\ds -\partial_t u^{N,0}-\sum_{j=0}^N \Delta_{x_j} u^{N,0} + H^0(x_0,D_{x_0}u^{N,0}, m^{N}_{\bf x})
%\\
%\ds \qquad 
+ \sum_{j =1}^N D_{x_j}u^{N,0}\cdot D_pH(x_j,x_0, D_{x_j}u^{N,j}, m^{N, j}_{\bf x})=0 \\ 
\ds -\partial_t u^{N,i}-\sum_{j=0}^N \Delta_{x_j} u^{N,i} + H(x_i,x_0,D_{x_i}u^{N,i}, m^{N,i}_{\bf x}) \\
\ds \qquad  + D_{x_0}u^{N,i} \cdot D_pH^0(x_0,D_{x_0}u^{N,0}, m^{N}_{\bf x})
+ \sum_{j\neq i,\ j\geq 1} D_{x_j}u^{N,i}\cdot D_pH(x_j,x_0, D_{x_j}u^{N,j}, m^{N, j}_{\bf x})=0 \\ 
u^{N,0}(T,{\bf x})= G^0(x_0,m^{N}_{\bf x}), \; u^{N,i}(T, {\bf x})= G(x_i,x_0,m^{N,i}_{\bf x}).
\end{array}\right.
\ee
where ${\bf x}= (x_0, \dots, x_N)$, $\ds m^{N}_{\bf x}=\frac{1}{N}\sum_{i\geq 1} \delta_{x_i}$, $\ds m^{N,i}_{\bf x}=\frac{1}{N-1}\sum_{j\neq \{0,i\}} \delta_{x_j}$. 

 Following \cite{friedman1972stochastic} for instance, the solution $(u^{N,0}, u^{N,1}, \dots, u^{N,N})$ describes the payoff at equilibrium of a $(N+1)-$player stochastic differential game. The particular structure of this system expresses the fact that the ``small" players (for $i=1, \dots, N$) have a symmetric cost function, giving rise to the same Hamiltonian $H$. In addition, for a small player $i=1, \dots, N$, the other players are indistinguishable and appear only through the empirical measure $m^{N,i}_{\bf x}$ in the Hamiltonian $H$ of this player. In the same way, the small players are indistinguishable for the large player: they appear through the empirical measure $ m^{N}_{\bf x}$ in the Hamiltonian $H^0$ of the large player. The fact that the players differ in size is expressed by the fact that the position of a small player $i$ (for $i=1, \dots, N$) enters in the Hamiltonian $H^0$ of the major player and in the Hamiltonian $H$ of the other small players with a weight $1/N$ or $1/(N-1)$, while  the position of the major player (i.e., for $i=0$) enters 
in the Hamiltonian $H$ the other players without weight.

Because of the symmetry of system \eqref{NashSyst} and the uniqueness of its solution, one can check that  $u^{N,0}$  only depends on $(t,x_0)$ and on the empirical measure $m^N_{\bf x}$ of the small players while $u^{N,i}$  depends on $(t, x_i, x_0)$ and on 
the empirical measure $m^{N,i}_{\bf x}$ (player $i$ playing a particular role for $v^{N,i}$). 

So, arguing as in \cite{CDLL}, one formally expects, as the number $N$ of small players tends to infinity, that $u^{N,0}\sim U^0(t,x_0,m^{N}_{\bf x})$, $u^{N,i}\sim U(t,x_i, x_0, m^{N,i}_{\bf x})$, where $U^0, U$ solve the system of master equations:  
\be\label{eq.MasterMM}
\left\{\begin{array}{rl}
\ds (i)& \ds -\partial_t U^{0}-\Delta_{x_0} U^0+ H^0(x_0,D_{x_0}U^{0}, m) -\int_{\R^d} \dive_yD_m U^0(t,x_0,m,y)dm(y)\\
&\ds \qquad + \int_{\R^d} D_mU^0(t,x_0,m,y)\cdot D_pH(y,x_0, D_{x}U(t,y,x_0,m), m)dm(y)=0 \\ 
&\ds \hspace{8cm} {\rm in} \; (0,T)\times \R^{d_0}\times {\mathcal P_2}(\R^d),\\
\ds (ii) & \ds -\partial_t U- \Delta_{x} U-\Delta_{x_0}U+ H(x,x_0,D_{x}U, m) -\int_{\R^d} \dive_y D_mU(t,x,x_0,m,y)dm(y)\\
&\ds \qquad  + D_{x_0}U \cdot D_pH^0(x_0,D_{x_0}U^0(t,x_0,m), m)\\
&\ds \qquad   + \int_{\R^d} D_m U(t,x,x_0,m,y)\cdot D_pH(y,x_0, D_{x}U(t,y,x_0,m), m)dm(y)=0 \\ 
&\ds \hspace{8cm} {\rm in} \; (0,T)\times \R^d\times \R^{d_0}\times {\mathcal P_2}(\R^d),\\
\ds (iii) & \ds U^{0}(T,x_0,m)= G^0(x_0,m), \qquad  {\rm in} \; \R^{d_0}\times {\mathcal P_2}(\R^d),\\
\ds (iv) & \ds U(T,x, x_0,m)= G(x,x_0,m)\qquad  {\rm in} \; \R^d\times \R^{d_0}\times {\mathcal P_2}(\R^d).\\
\end{array}\right.
\ee
This is a nonlinear equation stated in the space of probability measures $\Pw$ of $\R^d$. The notion of derivative with respect to a measure used in the above system is the same as in \cite{CDLL}. The master equation (without a major agent) was first introduced by Lasry and Lions and discussed by Lions in \cite{LLperso}. It was later studied in \cite{CDLL, CCP, CDbook, CgCrDe, GS14-2} in various degrees of generality. 
System \eqref{eq.MasterMM} is precisely the system (here without common noise, to simplify the expressions) introduced  in \cite{LL18}. The well-posedness in short time of \eqref{eq.MasterMM} is stated in \cite{LL18},  a  detailed proof  (with a completely different approach) is also contained in our companion paper \cite{CCP}. The first goal of the present paper is to show, through a classical verification argument, that \eqref{eq.MasterMM} yields  the notion of Nash equilibria  (in the case of Markovian feedback controls) introduced by Carmona and al. in \cite{CDbook, CW17, CZ16}: see Proposition \ref{prop.verification}. 
Then we rigorously prove that the solution of the Nash system \eqref{NashSyst} converges to the solution of the system of master equations \eqref{eq.MasterMM} as the number of players tends to infinity: see Theorem \ref{theo.limit}. The main interest of this result is that it provides another justification of the definition. The method of proof  follows closely the lines of \cite{CDLL} (see also \cite{CDbook}), where a similar statement for problems without a major player but with common noise is established.

%%%%%%%%%%%%%%%%%%%%%%%%%%%%%
\section{Notation and assumptions}\label{sec.not}

The state space of the major player is $\R^{d_0}$ ($d_0\in \N$, $d_0\geq 1$), the state space for the minor player is $\R^d$ ($d\in \N$, $d\geq 1$). Both spaces are endowed with the Euclidean distance $|\cdot|$. 

We denote by ${\mathcal P}(\R^d)$ the set of Borel probability measures on $\R^d$ and by  ${\mathcal P}_k(\R^d)$, $k\geq 1$, the set of measures in ${\mathcal P}(\R^d)$ with finite moment of order $k$: namely,
$$
M_k(m):= \left(\int_{\R^d} |x|^k m(dx)\right)^{1/k} <+\infty \qquad {\rm if} \; m\in {\mathcal P}_k(\R^d). 
$$
The set ${\mathcal P}_k(\R^d)$ is endowed with the distance  (see for instance \cite{AGS, RaRu98, villani})
$$
{\bf d}_k(m,m') =\inf_{\pi} \left( \int_{\R^d} |x-y|^k\pi(dx,dy)\right)^{1/k}, \qquad \forall m,m'\in {\mathcal P}_k(\R^d), 
$$
where the infimum is taken over the couplings $\pi$ between $m$ and $m'$, i.e., over the Borel probability measures $\pi$ on $\R^d\times \R^d$ with first marginal $m$ and second marginal $m'$. 

Given a map $U:\Pw\to \R$, the notion of derivative in the space of measures is the one introduced in \cite{CDLL} and  \cite{CCP}.  We say that a map $U:\Pw\to \R$ is $C^1$ is there exists a {\it continuous and bounded} map $\frac{\delta U}{\delta m}:\Pw\times \R^d\to \R$ such that 
$$
U(m')-U(m)= \int_0^1 \int_{\R^d} \frac{\delta U}{\delta m}((1-s)m+sm',y) (m'-m)(dy)ds \qquad \forall m,m'\in \Pw. 
$$
We say that the map $U$ is continuously $L-$differentiable if $U$ is $C^1$ and if $y\to \frac{\delta U}{\delta m}(m,y)$ is everywhere differentiable with a  continuous and globally bounded derivative on $\Pw\times \R^d$. We   denote by
\be\label{DM}
D_mU(m,y):= D_y \frac{\delta U}{\delta m}(m,y)
\ee
this $L-$derivative.

The Hamiltonians of the problem are $H^0: \R^{d_0}\times \R^{d_0}\times \Pw\to\R$ for the major player and $H:\R^d\times \R^{d_0}\times \R^d\times \Pw\to\R$ for the minor players. We assume that $H^0$ and $H$ are smooth enough to justify the computations below. In particular, $H^0$ and $H$ are assumed to be at least $C^1$ with respect to the measure variable. The maps $p^0\to H^0(x_0,p^0,m)$ and $p\to H(x,x_0,p,m)$ are also assumed to be strictly convex: 
$$
D^2_{p^0p^0}H^0(x_0,p^0,m)>0, \; D^2_{pp}H(x,x_0,p,m)>0.
$$
We denote by $L^0$ and $L$ the convex conjugate of $H^0$ and $H$ with respect to the variable $p^0$ and $p$ respectively: 
$$
L^0(x_0,\alpha^0, m)= \sup_{p^0\in \R^{d_0}} - \alpha^0\cdot p^0-H^0(x_0,p^0,m)
$$
and
$$
L(x,x_0,\alpha,m)= \sup_{p\in \R^d} -\alpha\cdot p -H(x,x_0,p,m).
$$

%%%%%%%%%%%%%%%%%%%%%%%%%%%%%
\section{Interpretation of the model}
 
In this section, we show that the system \eqref{eq.MasterMM} of master equations can be interpreted as a Nash equilibrium in the infinite players' game with a major player. 

Namely we consider the infinite players' game in which the minor players play  closed loop strategies of the form $\alpha=\alpha(t,x,x_0, m)$ (where $x$ is the position of the minor player, $x_0$ is the position of the major player and $m$ is the distribution of the minor players), while the major player plays a closed loop strategy of the form $\alpha^0=\alpha^0 (t,x_0,m)$.   Here and below,  $\alpha(t,x,x_0,m)$ and $\alpha^0(t,x^0,m)$ are (deterministic) functions which we assume to be bounded and locally Lipschitz continuous on, respectively, $[0,T]\times \R^d\times \R^{d_0}\times \Pw$  and $[0,T]\times  \R^{d_0}\times \Pw$. Thus, both  the representative minor agent and the major agent are playing feedback strategies. In particular, given a  Brownian motion $\{B^0_t\}$ in $\R^{d_0}$, and a stochastic flow of measures $\{m_t\}$ in $\Pw$ which is adapted to the filtration  generated by  $B^0:=\{B^0_t\}$,  the dynamics of the major player will be given by
$$
d  X^0_t =  \alpha^0(t,  X^0_t,  m_t)dt +\sqrt{2}dB^0_t
$$
while the dynamics of the representative minor player is given by
$$
d  X_t =  \alpha (t, X_t, X^0_t,  m_t)dt +\sqrt{2}dB_t\,,
$$
where $B:=\{B_t\}$ is a Brownian motion independent of $B^0$. We stress that the choice of Markovian feedback controls  implies that  the stochastic controls $\alpha^0_t= \alpha^0(t,  X^0_t, m_t)$, and, respectively, $\alpha_t=\alpha(t, X_t, X^0_t, m_t)$ are adapted to the filtrations generated by $(B^0_t)$ and, respectively, by $(B^0_t, B_t)$.

Notice that the stochastic flow of measures $m_t$, which is going to represent the distribution of the minor agents, must necessarily be adapted to the filtration $B^0$ of the major player. At equilibrium, $m_t$ will be the conditional law given $B^0_t$ of $X_t$. 

The following definition is a reformulation of the definition of \cite{CW17}  by using the stochastic  PDE satisfied by the distribution law $m_t$: 

\begin{Definition} Given an initial measure $ \mu_0\in \Pw$ and an initial position $x_0^0\in \R^{d_0}$ for the major player, a Nash equilibrium in the game is a pair $(\bar \alpha, \bar \alpha^0)$ of  feedback strategies for the minor and major player with the following properties:
\begin{enumerate}

\item $(\bar X^0_t, \bar m_t)$ are the  flow of positions for the major player and of the mean field for the minor players generated by $\bar \alpha$ and $\bar \alpha^0$, i.e.  the solution to the McKean-Vlasov  stochastic system:
\be\label{eq:McV}
\left\{ \begin{array}{l}
d\bar X^0_t = \bar \alpha^0(t,\bar X^0_t, \bar m_t)dt +\sqrt{2}dB^0_t\qquad {\rm in}\; [0,T]\\
d_t\bar   m_t= \left\{ \Delta\bar  m_t  - \dive(\bar m_t \bar \alpha(t,x,\bar X^0_t,\bar m_t))\right\}dt  \qquad {\rm in}\; [0,T]\times \R^d,\\
\bar m_0= \mu_0, \; \bar X^0_0=x_0^0. 
\end{array}\right.
\ee
\item The feedback strategy $\bar \alpha$ is optimal for each minor player, given $(\bar X^0_t, \bar m_t)$, namely  
\be\label{eqcondminor}
  J(\bar \alpha ; \bar \alpha^0, \bar m_t) \leq J(\alpha; \bar \alpha^0,  \bar m_t)  
\ee
 for any Markovian feedback control $\alpha_t:=\alpha(t,X_t,\bar X^0_t,\bar m_t)$, where
$$
J(\alpha; \bar \alpha^0, \bar m_t) =\E\left[ \int_0^T L(X_t,\bar X^0_t,  \alpha(t,X_t,\bar X^0_t, \bar m_t) , \bar m_t) dt + G(X_T, \bar X^0_T, \bar m_T)\right], 
$$
with $dX_t = \alpha_t dt +\sqrt{2}dB_t$, $X_0$ being distributed according to $\mu_0$. 

\item  The feedback strategy  $\bar \alpha^0$ is optimal for the major player, meaning that 
$$
J^0(\bar \alpha; \bar \alpha^0) \leq J^0(\bar \alpha; \alpha^0), 
$$
 for any different feedback law $\alpha^0(t,x,m)$, where 
$$
J^0(\bar \alpha; \alpha^0)= \E\left[ \int_0^T L^0(X^0_t,\alpha^0_t(t,X^0_t , m_t) ,  m_t) dt + G^0(X^0_T,  m_T)\right], 
$$
where $(X^0_t,m_t)$ is now the  flow of positions for the major player and of the mean field for the minor players  generated by $\bar \alpha$ and $ \alpha^0$, i.e., the solution to
\be\label{VerifXm}
\left\{ \begin{array}{l}
d X^0_t =  \alpha^0(t, X^0_t,m_t)dt +\sqrt{2}dB^0_t \qquad {\rm in}\; [0,T]\\
d_t m_t= \left\{ \Delta m_t  -\dive(m_t \bar \alpha(t,x, X^0_t,m_t))\right\}dt\qquad {\rm in}\; [0,T]\times \R^d,\\
m_0=\mu_0, \; X^0_0=x_0^0. 
\end{array}\right.
\ee
\end{enumerate}
\end{Definition}

A few comments on the definition are now in order. We first note that  $\bar m_t$ (and, respectively, $m_t$) are nothing but the conditional expectation given $(\bar X^0_s)_{s\leq t}$ of  the process  $\bar X_t$ (respectively, the conditional expectation given $(X^0_s)_{s\leq t}$ of  $X_t$), where $\bar X_t$, $X_t$ are solutions of the McKean-Vlasov SDEs
$$
d\bar X_s= \bar \alpha(s,\bar X_s, \bar X^0_s,  {\mathcal L}(\bar X_s|\bar X^0_s))ds + \sqrt{2}dB_s, \qquad {\mathcal L}(\bar X_0)=\mu_0, 
$$
and, respectively,
$$
dX_s=  \bar  \alpha(s,X_s,  X^0_s,  {\mathcal L}( X_s|X^0_s))ds + \sqrt{2}dB_s, \qquad {\mathcal L}(X_0)=\mu_0\,. 
$$
 We stress that, given any couple $\bar \alpha^0(t,x,m), \bar \alpha(t,x,x^0,m)$ of bounded and locally Lipschitz functions, the existence of  a (unique) solution $(\bar X_t, \bar m_t )$ of the system \eqref{eq:McV} can be proved with standard fixed point methods.

 The asymmetry between the major and the infinitely many minor players  appears clearly in the above definition of Nash equilibrium. Indeed, the cost for a minor player who deviates playing a strategy $\alpha$ is $J(\alpha ; \bar \alpha^0, \bar m_t)$ because, for this deviating small player, the mean field is fixed (since the strategies of the other minor players are fixed) as well as the corresponding  strategy of  the major player. In contrast, if the major player deviates, the mean field $(m_t)$ also changes because all the minor players react to the deviation. 
Note that this definition is exactly the one introduced by Carmona-Wang~ \cite{CW17} in their ``closed loop version"  of Section~2.  In fact, as detailed in \cite{CZ16}, the equilibrium defined so far can also be interpreted as a Nash equilibrium of a  two-player differential game by first defining  the cost of the small player  as $J(\alpha; \alpha^0, m_t)$ for an exogenous stochastic flow of measures and  the cost of the major player as $J^0(\alpha; \alpha^0)$ as above, and then requiring that the flow $m_t$  satisfies, at the Nash equilibrium, the consistency condition $m_t={\mathcal L}( \bar X_s|\bar X^0_s)$.

We now show the link between the  system of master equations and the Nash equilibria of the MFG problem with a major agent. 

\begin{Proposition}[Verification]\label{prop.verification} Let $(U^0,U)$ be a classical solution to the system of master equations \eqref{eq.MasterMM}. Then the pair
$$
(\bar \alpha(t,x,x_0,m), \bar \alpha^0(t,x_0,m)):= -(D_pH(x,x_0, D_xU(t,x,x_0,m),m), D_pH^0(x_0, D_{x_0}U^0(t,x_0,m),m))
$$
is a Nash equilibrium of the game. 
\end{Proposition}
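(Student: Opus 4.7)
The plan is a classical verification argument, based on an It\^o formula for $U(t,X_t,\bar X^0_t,\bar m_t)$ and for $U^0(t,X^0_t,m_t)$. Since there is no common noise, for each realization of $B^0$ the flow $\bar m_t$ is absolutely continuous in $t$ and satisfies the Fokker-Planck equation from \eqref{eq:McV}; integrating by parts, the chain rule in the measure variable applied to a classical solution then gives
\begin{equation*}
\int_{\R^d}\frac{\delta U}{\delta m}\,d_t\bar m_t(y) = \int_{\R^d}\dive_y D_mU(t,\cdot,\cdot,\bar m_t,y)\,\bar m_t(dy) + \int_{\R^d} D_mU\cdot\bar\alpha(t,y,\bar X^0_t,\bar m_t)\,\bar m_t(dy).
\end{equation*}
Existence of the equilibrium flow $(\bar X^0_t,\bar m_t)$ follows from the fixed-point construction mentioned in the text, since the regularity of the classical solution $(U^0,U)$ makes $\bar\alpha$ and $\bar\alpha^0$ bounded and locally Lipschitz.

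First I would check the optimality condition \eqref{eqcondminor} for the representative minor player. Fix any admissible feedback $\alpha$ and let $X_t$ solve $dX_t=\alpha(t,X_t,\bar X^0_t,\bar m_t)\,dt+\sqrt 2\,dB_t$. It\^o applied to $U(t,X_t,\bar X^0_t,\bar m_t)$, combined with the identity above, yields the drift
\begin{equation*}
\partial_t U + \Delta_xU + \Delta_{x_0}U + D_xU\cdot\alpha_t + D_{x_0}U\cdot\bar\alpha^0 + \int_{\R^d}\dive_y D_mU\,\bar m_t(dy) + \int_{\R^d} D_mU\cdot\bar\alpha\,\bar m_t(dy).
\end{equation*}
Substituting $\partial_t U$ from \eqref{eq.MasterMM}(ii), the Laplacians and the integrals $\int\dive_y D_mU\,\bar m_t(dy)$ cancel; since $\bar\alpha^0 = -D_pH^0(\cdot,D_{x_0}U^0,\cdot)$ and $\bar\alpha = -D_pH(\cdot,\cdot,D_xU,\cdot)$, the term $D_{x_0}U\cdot D_pH^0$ from the master equation cancels with $D_{x_0}U\cdot\bar\alpha^0$, and $\int D_mU\cdot D_pH\,\bar m_t(dy)$ cancels with $\int D_mU\cdot\bar\alpha\,\bar m_t(dy)$. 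What remains is $dU = \big(H(X_t,\bar X^0_t,D_xU,\bar m_t)+D_xU\cdot\alpha_t\big)\,dt + dM_t$. The Fenchel inequality $L(X_t,\bar X^0_t,\alpha_t,\bar m_t)\geq -D_xU\cdot\alpha_t - H(X_t,\bar X^0_t,D_xU,\bar m_t)$, with equality when $\alpha_t = \bar\alpha(t,X_t,\bar X^0_t,\bar m_t)$, yields $dU + L\,dt \geq dM_t$. Integrating over $[0,T]$, taking expectation, and using $U(T,\cdot)=G$ then give $\int U(0,x,x_0^0,\mu_0)\,\mu_0(dx) \leq J(\alpha;\bar\alpha^0,\bar m_t)$, with equality for $\alpha = \bar\alpha$.

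The argument for the major player is strictly parallel. Given an arbitrary feedback $\alpha^0$, let $(X^0_t,m_t)$ solve \eqref{VerifXm}: the minor players still use $\bar\alpha$, but the Fokker-Planck equation for $m_t$ is now driven by the new $X^0_t$. Applying It\^o to $U^0(t,X^0_t,m_t)$ and substituting $\partial_t U^0$ from \eqref{eq.MasterMM}(i), the integrals $\int D_mU^0\cdot D_pH\,m_t(dy)$ and $\int D_mU^0\cdot\bar\alpha\,m_t(dy)$ cancel against each other, leaving $dU^0 = \big(H^0(X^0_t,D_{x_0}U^0,m_t)+D_{x_0}U^0\cdot\alpha^0_t\big)\,dt + dM^0_t$; Fenchel-Young on $L^0$ together with the terminal condition $U^0(T,\cdot)=G^0$ then deliver $J^0(\bar\alpha;\alpha^0)\geq U^0(0,x_0^0,\mu_0) = J^0(\bar\alpha;\bar\alpha^0)$. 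The only technical subtlety is the justification of the integration by parts $\int\frac{\delta U}{\delta m}\Delta\bar m_t\,dy = \int\dive_y D_mU\,\bar m_t(dy)$ and its analogue with $\dive(\bar m_t\bar\alpha)$, which requires sufficient regularity of $\bar m_t$ (resp.\ $m_t$); this is guaranteed by the smoothing effect of the Laplacian in the Fokker-Planck equation and by the $C^1$-in-$y$ regularity of $\frac{\delta U}{\delta m}$ built into the notion of classical solution.
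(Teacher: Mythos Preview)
Your proof is correct and follows essentially the same route as the paper's: apply It\^o's formula to $U(t,X_t,\bar X^0_t,\bar m_t)$ (resp.\ $U^0(t,X^0_t,m_t)$), use the Fokker--Planck equation for the measure flow together with the master equation to reduce the drift to $H+D_xU\cdot\alpha$ (resp.\ $H^0+D_{x_0}U^0\cdot\alpha^0$), and conclude by the Fenchel--Young inequality and the terminal conditions. The only addition you make is the brief remark on the regularity needed for the integration by parts in the measure variable, which the paper leaves implicit.
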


\begin{proof} Let us first check that \eqref{eqcondminor} holds. For any $\alpha$, we have, in view of the equation satisfied by $(\bar m_t)$,  
\begin{align*}
dU(t,X_t,\bar X^0_t, \bar m_t) & = \Bigl\{ \partial_t U+ D_x U\cdot \alpha(t,X_t,\bar X^0_t, \bar m_t) + D_{x_0} U\cdot \bar \alpha^0(t,\bar X^0_t, \bar m_t) +\Delta_x U+ \Delta_{x_0} U\\
& \qquad  +\int_{\R^d} \dive_y (D_mU(t,X_t,\bar X^0_t, \bar m_t,y))\bar m_t(dy) \\
& \qquad  -\int_{\R^d} D_mU(t,X_t,\bar X^0_t, \bar m_t,y) \cdot D_pH(y,\bar X^0_t, D_xU(t,y,\bar X^0_t, \bar m_t), \bar m_t)\bar m_t (dy)\Bigr\}dt \\
& \qquad + \sqrt{2}\left( D_xU \cdot dB_t+ D_{x_0}U\cdot dB^0_t\right),
\end{align*}
where, unless otherwise specified, $U$ and its space derivatives are evaluated at $(t,X_t,\bar X^0_t, \bar m_t)$. Using the equation satisfied by $U$ and the definition of $\bar \alpha^0$, we obtain after integration in time and taking expectation:  
\begin{align*}
& \E\left[ G(X_T, \bar X^0_T, \bar m_T)\right] = \E\left[ U(T,X_T,\bar X^0_T, \bar m_T)\right] \\
& \qquad \qquad  = \E\left[ U(0, X_0, x^0_0, \mu_0)\right]+\int_0^T \E\left[ D_x U\cdot \alpha + H(X_t,\bar X^0_t, D_xU(t,X_t,\bar X^0_t, \bar m_t),\bar m_t)\right]dt\\
& \qquad \qquad  \geq  \E\left[ U(0, X_0, x^0_0, \mu_0)\right]-\int_0^T \E\left[ L(X_t,\bar X^0_t, \alpha(t,X_t,\bar X^0_t, \bar m_t), \bar m_t)\right]dt,
\end{align*}
with an equality if 
$$
\alpha(t,X_t,\bar X^0_t, \bar m_t)=-D_pH(t,X_t,\bar X^0_t, D_xU(t,X_t,\bar X^0_t, \bar m_t), \bar m_t)=
\bar \alpha(t,X_t,\bar X^0_t, \bar m_t). 
$$
This shows that 
$$
J(\alpha; \bar X^0_t, \bar m_t) \geq \E\left[ U(0, X_0, x^0_0, \mu_0)\right] =J(\bar \alpha; \bar X^0_t, \bar m_t), 
$$ 
so that $\bar \alpha$ is optimal. 

Next we show the optimality of $\bar \alpha^0$. Let $\alpha^0$ be a feedback for the major player, $(X_t, m_t)$ be given by \eqref{VerifXm}. Then 
\begin{align*}
dU^0(t, X^0_t, m_t) & = \Bigl\{ \partial_t U^0+  D_{x_0} U^0\cdot \alpha^0 +\Delta_{x_0} U^0
%\\
%& \qquad
  +\int_{\R^d} \dive_y (D_mU^0(t, X^0_t,  m_t,y)) m_t(dy) \\
& \qquad  -\int_{\R^d} D_mU^0(t, X^0_t,  m_t,y) \cdot D_pH(y,  X^0_t, D_xU(t,y,X^0_t, m_t),  m_t) m_t (dy)\Bigr\}dt \\
& \qquad + \sqrt{2} D_{x_0}U\cdot dB^0_t,
\end{align*}
where, unless otherwise specified, $U^0$ and its space derivatives are evaluated at $(t,X^0_t,  m_t)$. Therefore, in view of the equation satisfied by $U^0$, we have 
\begin{align*}
\E\left[ G^0(X^0_T,  m_T)\right]& = \E\left[ U^0(T,X^0_T,  m_T)\right] \\
&  = U^0(0, x_0, \mu_0)+\int_0^T \E\left[ D_{x_0} U^0\cdot \alpha^0 + H^0(X^0_t, D_{x_0}U^0(t,X^0_t, m_t), m_t)\right]dt\\
& \geq U^0(0, x_0, \mu_0)-\int_0^T \E\left[ L^0(X^0_t, \alpha^0(t,X^0_t,  m_t), m_t)\right]dt,
\end{align*}
with an equality if 
$$
\alpha^0(t,X^0_t,  m_t)=-D_pH^0(t, X^0_t, D_{x_0}U(t,X^0_t, m_t),  m_t)=
\bar \alpha^0(t,X^0_t,  m_t),
$$
in which case $m=\bar m$. 
This shows that 
$$
J^0(\bar \alpha; \alpha^0 ) \geq  U^0(0, x_0, \mu_0) = J^0(\bar \alpha;\bar \alpha^0 )
$$ 
and proves the optimality of $\bar \alpha^0$. 
\end{proof}

%%%%%%%%%%%%%%%%%%%%%%%%%%%%%%%%
\section{The mean field limit}

In this part we show that the master equation \eqref{eq.MasterMM} corresponds to the mean field limit of the $N-$player game with a major player. 
We work here with the $\dk$ distance and we assume that $H^0$, $D_pH^0$, $H$ and $D_pH$ are globally Lipschitz continuous in the sense that (for instance for $H$)
$$
|H(x,x_0, p, m)-H(x',(x_0)', p', m')| \leq C (|x-x'|+|x_0-(x_0)'|+|p-p'|+\dk(m,m'))
$$
for any $x,x'\in \R^d$, $x_0,x_0'\in \R^{d_0}$, $p,p'\in \R^d$, $m,m'\in \Pk$. 
Our main result is the following:

\begin{Theorem}\label{theo.limit} Let $(u^{N,i})$ be a classical solution to the Nash system \eqref{NashSyst} and $(U^0,U)$ be a classical solution to the system \eqref{eq.MasterMM} of master equations. There is a constant $C$, independent of $N$, ${\bf x}\in \R^{d_0}\times (\R^d)^{N}$ and $t\in [0,T]$, such that
$$
\left| u^{N,0}(t,{\bf x})- U^0(t, x_0, m^N_{\bf x}) \right| + 
\sup_{i=1,\dots, N} \left| u^{N,i}(t,{\bf x}) - U(t, x_i,x_0, m^{N,i}_{\bf x})\right| \leq CN^{-1} \left(1+ \frac{1}{N}\sum_{i=1}^N |x_i|\right),
$$
where, as before,  
$$
m^N_{\bf x}= \frac{1}{N} \sum_{i=1}^N \delta_{x_i}, \qquad m^{N,i}_{\bf x}= \frac{1}{N-1} \sum_{j\notin \{0, i\} } \delta_{x_j}. 
$$
\end{Theorem}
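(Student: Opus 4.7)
I would follow the strategy of \cite{CDLL}, adapted to the asymmetric setting: construct a candidate solution of the Nash system by projecting $(U^0, U)$ onto empirical measures, show it satisfies the Nash system up to an $O(1/N)$ remainder, and propagate the error backwards in time via Ito's formula along the Nash flow.

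Set $\tilde u^{N,0}(t,\bx) := U^0(t, x_0, m^N_\bx)$ and $\tilde u^{N,i}(t,\bx) := U(t, x_i, x_0, m^{N,i}_\bx)$ for $1 \leq i \leq N$, so the terminal conditions match those of $u^{N,\cdot}$. The $L$-derivative chain rule gives, for $j \geq 1$, $D_{x_j}\tilde u^{N,0} = \tfrac{1}{N} D_m U^0(t, x_0, m^N_\bx, x_j)$, $\Delta_{x_j}\tilde u^{N,0} = \tfrac{1}{N}\mathrm{div}_y D_m U^0(t, x_0, m^N_\bx, x_j) + O(1/N^2)$, and analogous formulas for $\tilde u^{N,i}$ with $m^{N,i}_\bx$. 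Plugging into \eqref{NashSyst} and comparing with \eqref{eq.MasterMM} at $m^N_\bx$ (resp.\ $m^{N,i}_\bx$), one finds that $(\tilde u^{N,\cdot})$ solves a \emph{perturbed} Nash system---with transport coefficients $D_{x_j} u^{N,j}$ replaced by $D_{x_j} \tilde u^{N,j}$---with source $\tilde R^{N,i}$. Using a second-order Taylor expansion in the measure variable (exploiting boundedness of $\tfrac{\delta}{\delta m}D_m U^0, \tfrac{\delta}{\delta m} D_x U, \tfrac{\delta}{\delta m} D_p H^0, \tfrac{\delta}{\delta m} D_p H$ granted by the classical-solution and global Lipschitz assumptions), the principal first-order contribution from the swaps $m^N \leftrightarrow m^{N,i}$ and $m^{N,k} \leftrightarrow m^{N,i}$ is telescopic, yielding the estimate $|\tilde R^{N,i}(t,\bx)| \leq \tfrac{C}{N}(1 + M_1(m^N_\bx))$, $M_1(m) := \int |y|\,dm(y)$, uniform in $i \in \{0,\dots,N\}$; the naive $d_1$-Lipschitz estimate would have produced a spurious factor $|x_i|$ which is avoided thanks to the uniform boundedness of the above $\tfrac{\delta}{\delta m}$-integrands.

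Introduce the Nash flow $\bY = (Y^0, \dots, Y^N)$ on $[t_0, T]$ with $\bY_{t_0} = \bx$, independent Brownian noises, and drifts $b_0^N = -D_p H^0(Y^0, D_{x_0} u^{N,0}, m^N_{\bY})$ and $b_j^N = -D_p H(Y^j, Y^0, D_{x_j} u^{N,j}, m^{N,j}_{\bY})$ for $j \geq 1$. Setting $w^{N,i} := u^{N,i} - \tilde u^{N,i}$ and applying Ito to $w^{N,i}(s, \bY_s)$, the transports in $D_{x_j} w^{N,i}$ for $j \neq i$ cancel with the $b_j^N$-drifts; the leftover splits into $(\alpha)$ the residual $\tilde R^{N,i}$; $(\beta)$ a Hamiltonian Bregman divergence of the form $H(\cdot, D_{x_i} u^{N,i}, \cdot) - H(\cdot, D_{x_i} \tilde u^{N,i}, \cdot) - D_p H(\cdot, D_{x_i} u^{N,i}, \cdot)\cdot D_{x_i} w^{N,i}$ (and the $H^0$-analogue for $i = 0$), non-positive by convexity in $p$; $(\gamma)$ cross-coupling terms $D_{x_j} \tilde u^{N,i} \cdot K_j \cdot D_{x_j} w^{N,j}$ for $j \neq i$, with $K_j$ bounded averaged Jacobians of $D_p H, D_p H^0$. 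Using $w^{N,i}(T,\cdot) = 0$ and running the dual argument with the drift that flips the sign of $(\beta)$ yields $|w^{N,i}(t_0,\bx)| \leq \E\int_{t_0}^T [|\tilde R^{N,i}| + \sum_{j\neq i}|D_{x_j}\tilde u^{N,i}|\,|D_{x_j} w^{N,j}|]\,ds$, and the moment bound $\E|Y^j_s| \leq |x_j| + C\sqrt{T}$ (drifts are bounded) propagates the residual contribution to the target $\tfrac{C}{N}(1 + M_1(m^N_\bx))$.

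The main obstacle is the cross-coupling in $(\gamma)$. For $i = 0$ every coefficient $|D_{x_j} \tilde u^{N,0}| = \tfrac{1}{N}|D_m U^0|$ is $O(1/N)$, so a Gronwall iteration on $\sup_\bx |w^{N,0}|$ would close provided we control $|D_{x_j} w^{N,j}|$ uniformly. For $i \geq 1$ the coefficient $|D_{x_0}\tilde u^{N,i}| = |D_{x_0} U|$ is merely $O(1)$, so the coupling to $|D_{x_0} w^{N,0}|$ does not close on values alone. The remedy, following \cite{CDLL, CDbook}, is to differentiate the PDE system once in the relevant spatial variables and run the same Ito/convexity argument for the gradients $D_{x_0} w^{N,0}$ and $D_{x_j} w^{N,j}$; the classical-solution hypothesis for $u^{N,\cdot}, U^0, U$ supplies the second-order regularity needed to keep the analogous residuals at $O(1/N)$, and a Gronwall iteration in a combined uniform norm of $w^{N,\cdot}$ and their first derivatives delivers the stated bound.
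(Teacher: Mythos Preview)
Your setup (projecting $(U^0,U)$ onto empirical measures, showing the projections satisfy the Nash system up to an $O(N^{-1})(1+M_1(m^N_\bx))$ residual, and propagating the defect along the Nash flow) coincides with the paper's. The divergence is in how you close the cross-coupling.

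The paper does \emph{not} work with $w^{N,i}$ linearly nor differentiate the PDE. It applies It\^o's formula to $(u^{N,i}-v^{N,i})^2$ along the Nash flow; the quadratic variation produces the coercive term $\sum_{j\geq 0}\int_s^T\E^{\bf Z}|D_{x_j}(u^{N,i}-v^{N,i})|^2\,dt$, which is exactly the gradient control you are missing. The troublesome $O(1)$ coupling of $w^{N,i}$ ($i\geq 1$) to $D_{x_0}w^{N,0}$ is handled by summing the inequalities for $i=1,\dots,N$ together with $N$ times the inequality for $i=0$: after Young's inequality with a small parameter, the cross terms $\varepsilon|D_{x_0}(u^{N,0}-v^{N,0})|^2$ and $\varepsilon N^{-1}\sum_j|D_{x_j}(u^{N,j}-v^{N,j})|^2$ are absorbed into the coercive term, and a single Gronwall on the combined quantity closes at rate $N^{-1}$ (then a second pass on each $i$ separately gives $N^{-2}$ for the squares). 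No derivative estimate on $u^{N,i}$ beyond what already appears in the Nash system is used.

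Your proposed remedy---differentiating the Nash system and rerunning the argument on gradients---has two concrete gaps. First, ``classical solution'' for each fixed $N$ gives you $C^2$ regularity of $u^{N,i}$ but not uniform-in-$N$ bounds on $D^2u^{N,i}$, which you would need to keep the differentiated residuals at $O(1/N)$ and to control the new coefficients; nothing in the hypotheses supplies this. Second, at the linearized level the convexity that gave the favorable sign of your term $(\beta)$ is lost, and the cross-coupling reappears one order higher (now involving second derivatives of $w^{N,j}$), so the Gronwall does not obviously close. Finally, the attribution is inaccurate: \cite{CDLL} uses precisely the squared-difference energy argument above, not differentiation of the system.
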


As in \cite{CDLL}, it is also possible to show that the optimal trajectories associated with the $N-$player problem converge to the optimal trajectory for the limit one.

\begin{proof} 
We  follow the strategy of proof of \cite{CDLL}.
Let $(u^{N,i})$ be the solution to \eqref{NashSyst} and $(U^0, U)$ be the solution of  \eqref{eq.MasterMM}. Following \cite{CDLL}, we set 
$$
v^{N,0}(t,{\bf x})= U^0(t, x_0, m^N_{\bf x}), \; v^{N,i}(t,{\bf x})= U(t,x_i, x_0, m^{N,i}_{\bf x}). 
$$
Let us fix $(t_0,x_0^0,\mu_0)\in [0,T]\times \R^{d_0}\times \Pw$ and let  $(Z^{N,i})_{i\geq 1}$ be i.i.d. random variables with law $\mu_0\in \Pw$. 
We consider the system ${\bf X}= (X^{N,0}, X^{N,1}, \cdots, X^{N,N})$
 of SDEs: 
\begin{align*}
& dX^{N,0}_s= -D_pH^0(X^{N,0}_s, D_{x_0} u^{N,0}(s, {\bf X}_s),m^N_{{\bf X}_s})ds+ \sqrt{2} dB^0_s ,\qquad s\in [ t_0,T], \\
& dX^{N,i}_s= -D_pH(X^{N,i}_s, X^{N,0}_s,D_{x_i} u^{N,i}(s, {\bf X}_s),m^{N,i}_{{\bf X}_s})ds+ \sqrt{2} dB^i_s ,\qquad s\in [ t_0,T], \\
& X^{N,0}_{t_0}= x_0^0, \; X^{N,i}_{t_0} = Z^{N,i}.
\end{align*}

 Let us first notice that the $(v^{N,i})$ are almost solutions to the Nash system: 

\begin{Lemma}\label{lem.vNi} For $i=0, \dots, N$, there exist continuous maps $r^{N,i}:[0,T]\times \R^{d_0+Nd}\to \R$ such that 
$$
\left\{\begin{array}{l}
\ds -\partial_t v^{N,0}-\sum_{j=0}^N \Delta_{x_j} v^{N,0} + H^0(x_0,D_{x_0}v^{N,0}, m^{N}_{\bf x})
%\\
%\ds \qquad 
+ \sum_{j\geq 1} D_{x_j}v^{N,0}\cdot D_pH(x_j,x_0, D_{x_j}v^{N,j}, m^{N, j}_{\bf x})=r^{N,0} \\ 
\ds -\partial_t v^{N,i}-\sum_{j=0}^N \Delta_{x_j} v^{N,i} + H(x_i,x_0,D_{x_i}v^{N,i}, m^{N,i}_{\bf x}) \\
\ds \qquad  + D_{x_0}v^{N,i} \cdot D_pH^0(x_0,D_{x_0}v^{N,0}, m^{N}_{\bf x})
+ \sum_{j\neq i,\ j\geq 1} D_{x_j}v^{N,i}\cdot D_pH(x_j,x_0, D_{x_j}v^{N,j}, m^{N, j}_{\bf x})=r^{N,i} \\ 
v^{N,0}(T,{\bf x})= G^0(x_0,m^{N}_{\bf x}), \; v^{N,i}(T, {\bf x})= G(x_i,x_0,m^{N,i}_{\bf x}).
\end{array}\right.
$$
and 
$$
\sup_{i=0, \dots, N} \|r^{N,i}(t,{\bf x})\|_\infty\leq 
\frac{C}{N} (1+M_1(m^N_{\bf x})).
$$
\end{Lemma}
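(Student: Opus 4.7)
The plan is to substitute the definitions of $v^{N,0}$ and $v^{N,i}$ into the Nash operator on the left-hand side of \eqref{NashSyst}, compute the derivatives via the chain rule on $\Pw$ using the $L$-derivative formula \eqref{DM}, and cancel the principal terms against those of \eqref{eq.MasterMM}, so that only $O(1/N)$-residuals remain.

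First, for $j\ge 1$, I will compute
\[
D_{x_j} v^{N,0}(t,{\bf x}) = \frac{1}{N}\, D_m U^0(t, x_0, m^N_{\bf x}, x_j),
\]
and, taking one more derivative in $x_j$,
\[
\Delta_{x_j} v^{N,0}(t,{\bf x}) = \frac{1}{N}\, \dive_y D_m U^0(t, x_0, m^N_{\bf x}, x_j) + \frac{1}{N^2}\, R^{0,j}(t,{\bf x}),
\]
where $R^{0,j}$ is bounded thanks to the smoothness assumptions on the second-order $L$-derivative of $U^0$ (whose existence is part of the regularity theory of the solution, as in \cite{CCP}). Analogous formulas hold for $v^{N,i}$ with $N-1$ in place of $N$ and $m^{N,i}_{\bf x}$ in place of $m^N_{\bf x}$; the $x_i$- and $x_0$-derivatives of $v^{N,i}$ produce directly $D_xU$, $\Delta_x U$, $D_{x_0} U$ and $\Delta_{x_0} U$, since $m^{N,i}_{\bf x}$ involves neither $x_i$ nor $x_0$.

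Summing over $j$, the first-order measure terms yield the integrals $\int \dive_y D_m U^0\,dm^N_{\bf x}$ and $\int D_m U^0 \cdot D_pH\,dm^N_{\bf x}$ appearing in \eqref{eq.MasterMM}(i), up to a Laplacian error of order $N\cdot N^{-2}=1/N$. Subtracting \eqref{eq.MasterMM}(i) leaves a transport residual of the form
\[
\frac{1}{N} \sum_{j=1}^N D_m U^0(t, x_0, m^N_{\bf x}, x_j) \cdot \Bigl[ D_pH(x_j, x_0, D_xU(t,x_j,x_0,m^{N,j}_{\bf x}), m^{N,j}_{\bf x}) - D_pH(x_j, x_0, D_xU(t,x_j,x_0,m^N_{\bf x}), m^N_{\bf x}) \Bigr],
\]
which I estimate via the Lipschitz continuity of $D_pH$ in $(p,m)$, the measure-Lipschitz regularity of $D_xU$, the boundedness of $D_mU^0$, and the coupling bound $\dk(m^N_{\bf x}, m^{N,j}_{\bf x}) \le C(|x_j|+M_1(m^N_{\bf x}))/N$. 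After averaging over $j$, the $|x_j|$ factors combine to produce $M_1(m^N_{\bf x})$, yielding the desired bound on $r^{N,0}$. The argument for $r^{N,i}$ is analogous, but also requires comparing $m^N_{\bf x}$ with $m^{N,i}_{\bf x}$ in the term involving $D_pH^0$, using the estimate $\dk(m^N_{\bf x}, m^{N,i}_{\bf x}) \le C(|x_i|+M_1(m^N_{\bf x}))/N$, and $\dk(m^{N,i}_{\bf x},m^{N,j}_{\bf x}) \le |x_i - x_j|/(N-1)$ in the transport sum.

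The main obstacle is the systematic bookkeeping of the errors and the careful application of the chain rule on $\Pw$: each discrepancy arising from the substitution of one empirical measure by another (there are $O(N)$ of them inside the transport sums) must, after averaging, collapse into a single $O(1/N)(1+M_1(m^N_{\bf x}))$ quantity rather than a larger pointwise expression. This relies crucially on the boundedness of $D_mU^0$ and $D_mU$ and on the measure-Lipschitz regularity of $D_xU$ and $D_{x_0}U^0$, properties inherited from the solution theory of the master equation \eqref{eq.MasterMM} developed in \cite{CCP}.
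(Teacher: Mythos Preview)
Your proposal is correct and follows essentially the same route as the paper: compute the derivatives of $v^{N,0}$ and $v^{N,i}$ with respect to each $x_j$ via the chain rule on $\Pw$ (yielding $\frac{1}{N}D_mU^0$, $\frac{1}{N}\dive_y D_mU^0 + O(N^{-2})$, etc.), substitute into the Nash operator, cancel against the master equation \eqref{eq.MasterMM}, and bound the two residual pieces (the $N^{-2}$ second-order trace terms and the measure mismatch $m^{N,j}_{\bf x}$ versus $m^N_{\bf x}$ inside $D_pH$) using the regularity of $U^0,U$ and the Lipschitz continuity of $D_pH$. The paper's only minor variation is that it estimates the averaged quantity $\frac{1}{N}\sum_{j\ge 1}\dk(m^{N,j}_{\bf x},m^N_{\bf x})\le \frac{C}{N}M_1(m^N_{\bf x})$ directly, whereas you first state the pointwise bound $\dk(m^N_{\bf x},m^{N,j}_{\bf x})\le C(|x_j|+M_1(m^N_{\bf x}))/N$ and then average; both lead to the same conclusion.
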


Recall that $M_1(m)= \int_{\R^d} |x|m(dx)$ is the first order moment of the measure $m$.  We postpone the proof of the Lemma and proceed with the ongoing proof. The main part of the proof consists in estimating the difference $u^{N,i}-v^{N,i}$ along the trajectory ${\bf X}$. For this we set 
$$
U^{N,i}_t = u^{N,i}(t,{\bf X}^{N}_t), \qquad V^{N,i}_t = v^{N,i}(t, {\bf X}^{N}_t). 
$$
We first note that, in view of the equation satisfied by  the $(u^{N,i})$, 
\begin{align*}
dU^{N,0}_t &= \Bigl(\partial_t u^{N,0} + \sum_{j\geq 0} \Delta_{x_j} u^{N,0}   -D_pH^0(X^{N,0}_t, D_{x_0}u^{N,0}(t, {\bf X}_t),m^N_{{\bf X}_t})\cdot D_{x_0}u^{N,0} \\
 & \qquad
-\sum_{j\geq 1} D_pH(X^{N,j}_t, X^{N,0}_t,D_{x_j}u^{N,j}, m^{N,j}_{{\bf X}_t})\cdot D_{x_j} u^{N,0}\Bigr)dt 
%\\
%& 
+\sqrt{2} \sum_{j\geq 0} D_{x_j}u^{N,0}\cdot dB^j_t \\
& =  (H^0(X^{N,0}_t,D_{x_0}u^{N,0},m^N_{{\bf X}_t})    -D_pH^0(X^{N,0}_t, D_{x_0}u^{N,0},m^N_{{\bf X}_t})\cdot D_{x_0}u^{N,0} \\
& \qquad 
+\sqrt{2} \sum_{j\geq 0} D_{x_j}u^{N,0}\cdot dB^j_t,
\end{align*}
where the $u^{N,j}$ are evaluated at $(t,{\bf X}^{N}_t)$. 
On the other hand, by Lemma \ref{lem.vNi}, we have 
\begin{align*}
d V^{N,0}_t &= (\partial_t v^{N,0} + \sum_{j\geq 0} \Delta_{x_j} v^{N,0}   -D_pH^0(X^{N,0}_t, D_{x_0}u^{N,0}(t, {\bf X}_t),m^N_{{\bf X}_t})\cdot D_{x_0}v^{N,0} \\
 & \qquad
-\sum_{j\geq 1} D_pH(X^{N,j}_t, X^{N,0}_t,D_{x_j}u^{N,j}, m^{N,j}_{{\bf X}_t})\cdot D_{x_j} v^{N,0})dt 
%\\
%& 
+\sqrt{2} \sum_{j\geq 0} D_{x_j}v^{N,0}\cdot dB^j_t \\
&= \Bigl( H^0(X^{N,0}_t, D_{x_0}v^{N,0}, m^N_{{\bf X}_t}) -D_pH^0(X^{N,0}_t, D_{x_0}u^{N,0}(t, {\bf X}_t),m^N_{{\bf X}_t})\cdot D_{x_0}v^{N,0} \\
 & \qquad
-\sum_{j\geq 1} (D_pH(X^{N,j}_t, X^{N,0}_t,D_{x_j}u^{N,j}, m^{N,j}_{{\bf X}_t})
-D_pH(X^{N,j}_t, X^{N,0}_t, D_{x_j}v^{N,j}, m^{N,j}_{{\bf X}_t})) \cdot D_{x_j} v^{N,0} \\
& \qquad    - r^{N,0} \Bigr)dt 
%\\
%& \qquad 
+\sqrt{2} \sum_{j\geq 0} D_{x_j}v^{N,0}\cdot dB^j_t ,
\end{align*}
where the $u^{N,j}$, $r^{N,0}$ and $v^{N,0}$ are, here again, evaluated at $(t,{\bf X}_t)$. So, for any $s\in [t_0,T]$, 
\begin{align*}
& (U^{N,0}_T-V^{N,0}_T)^2  = (U^{N,0}_{s}-V^{N,0}_{s})^2 \\
&  +
\int_{s}^T 2(U^{N,0}_{t}-V^{N,0}_{t}) \Bigl(H^0(X^{N,0}_t,D_{x_0}u^{N,0},m^N_{{\bf X}_t})-H^0(X^{N,0}_t, D_{x_0}v^{N,0}, m^N_{{\bf X}_t}) \\
& \qquad -D_pH^0(X^{N,0}_t, D_{x_0}u^{N,0},m^N_{{\bf X}_t})\cdot (D_{x_0}u^{N,0} -D_{x_0}v^{N,0}) \\
& \qquad 
+ \sum_{j\geq 1} (D_pH(X^{N,j}_t, X^{N,0}_t,D_{x_j}u^{N,j}, m^{N,j}_{{\bf X}_t})
-D_pH(X^{N,j}_t, X^{N,0}_t, D_{x_j}v^{N,j}, m^{N,j}_{{\bf X}_t})) \cdot D_{x_j} v^{N,0}  \\
&\qquad +r^{N,0} \Bigr)dt
%\\
%&  
+2 \sum_{j\geq0} \int_{s}^T | D_{x_j}u^{N,0}-D_{x_j}v^{N,0}|^2 dt  \\ 
& \qquad \qquad 
+ 2\sqrt{2}  \sum_{j\geq 0} \int_{s}^T (U^{N,0}_{t}-V^{N,0}_{t}) (D_{x_j}u^{N,0}-D_{x_j}v^{N,0})\cdot dB^j_t .
\end{align*}
Note that $U^{N,0}_T=V^{N,0}_T$ because the maps  $u^{N,0}$ and $v^{N,0}$ have the same terminal condition.  
Using the global Lipschitz continuity of $H^0$ and $H$ and the fact that $\|D_{x_j}v^{N,0}\|_\infty\leq CN^{-1}$ for $j\geq 1$, we infer that 
\begin{align*}
& 0  \geq (U^{N,0}_{s}-V^{N,0}_{s})^2 \\
&  - C
\int_{s}^T |U^{N,0}_{t}-V^{N,0}_{t}| \Bigl( |D_{x_0}(u^{N,0}-v^{N,0})|  
+ N^{-1} \sum_{j\geq 1} |D_{x_j}(u^{N,j}-v^{N,j})|+|r^{N,0}| \Bigr)dt \\
&  + 2\sum_{j\geq0} \int_{s}^T | D_{x_j}(u^{N,0}-v^{N,0})|^2 dt  
+ 2\sqrt{2}  \sum_{j\geq 0} \int_{s}^T (u^{N,0}-v^{N,0}) D_{x_j}(u^{N,0}-v^{N,0})\cdot dB^j_t .
\end{align*}
Taking the conditional expectation $ \E^{\bf Z}$ given  ${\bf Z}$ and using Young's inequality we find, for any $\ep>0$,  
\begin{align}
& 0  \geq \E^{\bf Z}\left[(U^{N,0}_{s}-V^{N,0}_{s})^2\right] 
- \int_{s}^T  \E^{\bf Z}\Bigl[  C\ep^{-1} |U^{N,0}_{t}-V^{N,0}_{t}|^2  +\ep |D_{x_0}(u^{N,0}-v^{N,0})|^2  +\ep |r^{N,0}|^2 \notag
\\
&\qquad\qquad\qquad + C\ep N^{-1} \sum_{j\geq 1} |D_{x_j}(u^{N,j}-v^{N,j})|^2 \Bigr]dt \notag\\
& \qquad + 2\sum_{j\geq0} \int_{s}^T   \E^{\bf Z}\Bigl[ | D_{x_j}(u^{N,0}-v^{N,0})|^2\Bigr] dt  . \label{khqebsdn}
\end{align}
Note that the estimate of $r^{N,i}$ in Lemma \ref{lem.vNi} implies that 
$$
\E^{{\bf Z}}[(r^{N,0}(t, {\bf X}^N_t))^2] \leq \frac{C}{N^2}(1+ \E^{{\bf Z}}[M_1(m^N_{{\bf X}^N_t})])^2, 
$$
where, in view of the uniform  bound on $D_pH^0$ and $D_pH$, we have 
$$
\E^{{\bf Z}}[M_1(m^N_{{\bf X}^N_t})]\leq C(1+M_1(m^N_{{\bf Z}})). 
$$
So
$$
\E^{{\bf Z}}[(r^{N,0}(t, {\bf X}^N_t))^2] \leq \frac{C}{N^2}(1+ M_1(m^N_{{\bf Z}}))^2= \frac{C_{{\bf Z}}}{N^2}, 
$$
where $C_{{\bf Z}}:= C(1+ M_1(m^N_{{\bf Z}}))^2$. Coming back to \eqref{khqebsdn}, we find, for  $\ep$ small, 
\begin{align}
& C_{{\bf Z}} \ep N^{-2}  \geq  \E^{\bf Z}\left[(U^{N,0}_{s}-V^{N,0}_{s})^2\right] \notag\\
& \qquad \qquad 
- C\int_{s}^T  \E^{\bf Z}\Bigl[ \ep^{-1} |U^{N,0}_{t}-V^{N,0}_{t}|^2 
%\\
%&\qquad\qquad\qquad 
+ \ep N^{-1} \sum_{j\geq 1} |D_{x_j}(u^{N,j}-v^{N,j})|^2 \Bigr]dt \label{unun}\\
& \qquad\qquad + \sum_{j\geq0} \int_{s}^T   \E^{\bf Z}\Bigl[ | D_{x_j}(u^{N,0}-v^{N,0})|^2\Bigr] dt  . \notag
\end{align}

We now make the same computation for $i\geq 1$. We have 
\begin{align*}
dU^{N,i}_t &= \Bigl(\partial_t u^{N,i} + \sum_{j\geq 0} \Delta_{x_j} u^{N,i}   -D_pH^0(X^{N,0}_t, D_{x_0}u^{N,0},m^N_{{\bf X}_t})\cdot D_{x_0}u^{N,i} \\
 & \qquad
-\sum_{j\geq 1} D_pH(X^{N,j}_t, X^{N,0}_t,D_{x_j}u^{N,j}, m^{N,j}_{{\bf X}_t})\cdot D_{x_j} u^{N,i}\Bigr)dt 
+\sqrt{2} \sum_{j\geq 0} D_{x_j}u^{N,i}\cdot dB^j_t \\
& =  (H(X^{N,i}_t,X^{N,0}_t,D_{x_i}u^{N,i},m^{N,i}_{{\bf X}_t})  -D_pH(X^{N,i}_t, X^{N,0}_t, D_{x_i}u^{N,i}, m^{N,i}_{{\bf X}_t})\cdot D_{x_i} u^{N,i} \Bigr)dt\\
& \qquad    
%\\
%& \qquad 
+\sqrt{2} \sum_{j\geq 0} D_{x_j}u^{N,i}\cdot dB^j_t,
\end{align*}
where the $u^{N,j}$ is evaluated at $(t,{\bf X}_t)$. 
On the other hand, by Lemma \ref{lem.vNi}, we have 
\begin{align*}
d V^{N,i}_t &= \Bigl(\partial_t v^{N,i} + \sum_{j\geq 0} \Delta_{x_j} v^{N,i}   -D_pH^0(X^{N,0}_t, D_{x_0}u^{N,0},m^N_{{\bf X}_t})\cdot D_{x_0}v^{N,i} \\
 & \qquad
-\sum_{j\geq 1} D_pH(X^{N,j}_t, X^{N,0}_t, D_{x_j}u^{N,j}, m^{N,j}_{{\bf X}_t})\cdot D_{x_j} v^{N,i}\Bigr)dt 
%\\
%& 
+\sqrt{2} \sum_{j\geq 0} D_{x_j}v^{N,i}\cdot dB^j_t \\
&= \Bigl( H(X^{N,i}_t,X^{N,0}_t, D_{x_i}v^{N,i}, m^N_{{\bf X}^N_t})
- D_pH(X^{N,i}_t, X^{N,0}_t, D_{x_i}u^{N,i}, m^{N,i}_{{\bf X}_t})\cdot D_{x_i} v^{N,i} \\
& \qquad -(D_pH^0(X^{N,0}_t, D_{x_0}u^{N,0},m^N_{{\bf X}_t})
-D_pH^0(X^{N,0}_t, D_{x_0}v^{N,0},m^N_{{\bf X}_t})) \cdot D_{x_0}v^{N,i} \\
 & \qquad
-\sum_{j\neq \{0,i\}} (D_pH(X^{N,j}_t, X^{N,0}_t, D_{x_j}u^{N,j}, m^{N,j}_{{\bf X}_t})
-D_pH(X^{N,j}_t, X^{N,0}_t, D_{x_j}v^{N,j}, m^{N,j}_{{\bf X}_t})) \cdot D_{x_j} v^{N,i} \\
& \qquad    - r^{N,i} \Bigr)dt 
%\\
%& \qquad 
+\sqrt{2} \sum_{j\geq 0} D_{x_j}v^{N,i}\cdot dB^j_t \ ,
\end{align*}
where the $u^{N,j}$, $r^{N,j}$ and $v^{N,j}$ are, here again, evaluated at $(t,{\bf X}_t)$. So, for any $s\in [t_0,T]$, 
\begin{align*}
& (U^{N,i}_T-V^{N,i}_T)^2  = (U^{N,i}_{s}-V^{N,i}_{s})^2 \\
&  +
\int_{s}^T 2(U^{N,i}_{t}-V^{N,i}_{t}) \Bigl(H(X^{N,i}_t,X^{N,0}_t,D_{x_i}u^{N,i},m^N_{{\bf X}_t})-H(X^{N,i}_t,X^{N,0}_t, D_{x_i}v^{N,i}, m^N_{{\bf X}_t}) \\
& \qquad -D_pH(X^{N,i}_t, X^{N,0}_t, D_{x_i}u^{N,i}, m^{N,i}_{{\bf X}_t})\cdot D_{x_i} (u^{N,i}-v^{N,i}) \\
& \qquad -(D_pH^0(X^{N,0}_t, D_{x_0}u^{N,0},m^N_{{\bf X}_t})
-D_pH^0(X^{N,0}_t, D_{x_0}v^{N,0},m^N_{{\bf X}_t})) \cdot D_{x_0}v^{N,i} \\
& \qquad 
+ \sum_{j\neq 0, i} (D_pH(X^{N,j}_t, X^{N,0}_t,D_{x_j}u^{N,j}, m^{N,j}_{{\bf X}_t})
-D_pH(X^{N,j}_t, X^{N,0}_t, D_{x_j}v^{N,j}, m^{N,j}_{{\bf X}_t})) \cdot D_{x_j} v^{N,i}  \\
&\qquad - r^{N,i} \Bigr)dt
%\\
%&  
+ 2\sum_{j\geq0} \int_{s}^T | D_{x_j}(u^{N,i}-v^{N,i})|^2 dt  
\\ 
& \qquad \qquad 
+ 2\sqrt{2}  \sum_{j\geq 0} \int_{s}^T (U^{N,i}_{t}-V^{N,i}_{t}) D_{x_j}(u^{N,i}-v^{N,i})\cdot dB^j_t .
\end{align*}
Since  $U^{N,i}_T=V^{N,i}_T$ and $H^0$ and $H$ are Lipschitz continuous and since $\|D_{x_j}v^{n,i}\|_\infty\leq CN^{-1}$ for $j\neq \{0,i\}$, we have
\begin{align*}
& 0  \geq (U^{N,i}_{s}-V^{N,i}_{s})^2 \\
&  -
\int_{s}^T C|U^{N,i}_{t}-V^{N,i}_{t}| \Bigl( |D_{x_i}(u^{N,i}-v^{N,i})|  + |D_{x_0}(u^{N,0}-v^{N,0})|  \\
& \qquad\qquad\qquad\qquad\qquad\qquad + N^{-1} \sum_{j\neq \{0,i\}} |D_{x_j}(u^{N,j}-v^{N,j}|+|r^{N,i}| \Bigr)dt \\
&  +2\sum_{j\geq0} \int_{s}^T | D_{x_j}(u^{N,i}-v^{N,i})|^2 dt  
+ 2\sqrt{2}  \sum_{j\geq 0} \int_{s}^T (u^{N,i}-v^{N,i}) D_{x_j}(u^{N,i}-v^{N,i})\cdot dB^j_t .
\end{align*}
Taking expectation and using Young's inequality and the  estimate of $r^{N,i}$ in Lemma \ref{lem.vNi}, we find, for $\ep$ small enough,  
\begin{align}
& C_{{\bf Z}}\ep N^{-2}  \geq  \E^{\bf Z}\left[(U^{N,i}_{s}-V^{N,i}_{s})^2\right] 
- C \int_{s}^T  \E^{\bf Z}\Bigl[  \ep^{-1} |U^{N,i}_{t}-V^{N,i}_{t}|^2    \notag \\
&\qquad\qquad +\ep |D_{x_0}(u^{N,0}-v^{N,0})|^2 + \ep N^{-1} \sum_{j\neq \{0,i\}} |D_{x_j}(u^{N,j}-v^{N,j})|^2 \Bigr]dt \label{deuxdeux} \\
&\qquad \qquad + \sum_{j\geq0} \int_{s}^T   \E^{\bf Z}\Bigl[ | D_{x_j}(u^{N,i}-v^{N,i})|^2\Bigr] dt . \notag 
\end{align}
We add  inequalities in \eqref{deuxdeux} for $i=1,\dots, N$ with N times inequality \eqref{unun} to obtain
\begin{align}
& C_{{\bf Z}}\ep N^{-1}  \geq  N\E^{\bf Z}\left[(U^{N,0}_{s}-V^{N,0}_{s})^2\right]+\sum_{i\geq 1} \E^{\bf Z}\left[(U^{N,i}_{s}-V^{N,i}_{s})^2\right] \notag \\
& \qquad  
- CN \ep^{-1} \int_{s}^T \E^{\bf Z}\left[   |U^{N,0}_{t}-V^{N,0}_{t}|^2 \right]dt + C\ep^{-1} \sum_{i\geq 1}  \int_{s}^T \E^{\bf Z}\left[   |U^{N,i}_{t}-V^{N,i}_{t}|^2\right] dt
\notag \\
&\qquad 
- CN \ep \int_s^T \E^{\bf Z}\left[ |D_{x_0}(u^{N,0}-v^{N,0})|^2 \right]dt
-  C\ep \sum_{j\geq 1} \int_s^T \E^{\bf Z}\left[ |D_{x_j}(u^{N,j}-v^{N,j})|^2 \right]dt \label{khjzqbeslkndc}\\
& \qquad +N \sum_{j\geq0} \int_{s}^T   \E^{\bf Z}\Bigl[ | D_{x_j}(u^{N,0}-v^{N,0})|^2\Bigr] dt 
 + \sum_{i\geq 1, j\geq0} \int_{s}^T   \E^{\bf Z}\Bigl[ | D_{x_j}(u^{N,i}-v^{N,i})|^2\Bigr] dt .\label{khjzqbeslkndc2}
\end{align}
Choosing a last time $\ep$ small enough, we can absorb the terms in line \eqref{khjzqbeslkndc} into the term in line \eqref{khjzqbeslkndc2}. 
Then, by Gronwall's Lemma, we find
\begin{align}
& N\E^{\bf Z}\left[(U^{N,0}_{s}-V^{N,0}_{s})^2\right]+\sum_{i\geq 1} \E^{\bf Z}\left[(U^{N,i}_{s}-V^{N,i}_{s})^2\right]  \label{kuhzbaefd}\\
& +N \sum_{j\geq0} \int_{s}^T   \E^{\bf Z}\Bigl[ | D_{x_j}(u^{N,0}-v^{N,0})|^2\Bigr] dt 
 + \sum_{i\geq 1, j\geq0} \int_{s}^T   \E^{\bf Z}\Bigl[ | D_{x_j}(u^{N,i}-v^{N,i})|^2\Bigr] dt \notag
 \leq C_{{\bf Z}}N^{-1}.
\end{align}
We use this inequality to evaluate the second line in \eqref{deuxdeux}: for $i=1, \dots, N$ we have
\begin{align*}
& C_{{\bf Z}} N^{-2}  \geq  \E^{\bf Z}\left[(U^{N,i}_{s}-V^{N,i}_{s})^2\right] 
- C\ep^{-1} \int_{s}^T  \E^{\bf Z}\Bigl[   |U^{N,i}_{t}-V^{N,i}_{t}|^2 \Bigr]dt    \\
&\qquad \qquad + \sum_{j\geq0} \int_{s}^T   \E^{\bf Z}\Bigl[ | D_{x_j}(u^{N,i}-v^{N,i})|^2\Bigr] dt   
\end{align*}
and finally obtain, thanks again to Gronwall's Lemma: 
\begin{align*}
& \E^{\bf Z}\left[(U^{N,i}_{s}-V^{N,i}_{s})^2\right] 
+\sum_{j\geq0} \int_{s}^T   \E^{\bf Z}\Bigl[ | D_{x_j}(u^{N,i}-v^{N,i})|^2\Bigr] dt  \leq C_{{\bf Z}} N^{-2} . 
\end{align*}
For $s=t_0$ and in view of the initial condition of the process ${\bf X}$, this proves that, $\P-$a.s. and for any $i=1, \dots, N$, 
\begin{align*}
& |u^{N,i}(t_0,{\bf Z})-v^{N,i}(t_0,{\bf Z})|^2=  \E^{\bf Z}\left[(U^{N,i}_{t_0}-V^{N,i}_{t_0})^2\right] 
\leq C_{{\bf Z}} N^{-2}   
\end{align*}
while, for  $i=0$, we have by \eqref{kuhzbaefd}:
\begin{align*}
& |u^{N,0}(t_0,{\bf Z})-v^{N,0}(t_0,{\bf Z})|^2=  \E^{\bf Z}\left[(U^{N,0}_{t_0}-V^{N,0}_{t_0})^2\right] 
\leq C_{{\bf Z}} N^{-2} . 
\end{align*}
If we choose the $Z_i$ identically distributed with a positive density and finite first order moment, we obtain the Theorem by the continuity of $u^{N,i}$ and of $U$. 
\end{proof}

\begin{proof}[Proof of Lemma \ref{lem.vNi}]  Let us recall the following relations, proved in \cite[Proposition 6.1.1]{CDLL}. For $i,j \neq 0$ with $i\neq j$, we have
\begin{align*}
& D_{x_0} v^{N,0} (t,{\bf x})= D_{x_0} U^0(t,x_0, m^N_{\bf x}), \; D_{x_i} v^{N,0} (t,{\bf x})= \frac{1}{N} D_m U^0(t,x_0, m^N_{\bf x},x_i), \\
& D^2_{x_ix_j} v^{N,0} (t,{\bf x})= \frac{1}{N^2} D^2_{mm} U^0(t,x_0, m^N_{\bf x},x_i,x_j), \\ 
& D^2_{x_ix_i} v^{N,0} (t,{\bf x})= \frac{1}{N^2} D^2_{mm} U^0(t,x_0, m^N_{\bf x},x_i,x_i)+ \frac{1}{N} D^2_{ym}   U^0(t,x_0, m^N_{\bf x},x_i),
\end{align*}
 where, we recall that $D_m U^0$ depends on one extra variable (see \eqref{DM}) and consequently, $D_{mm}^2U^0$ depends on  two extra variables.
 
The corresponding equalities hold for $v^{N,i}$ with $i\geq 1$ (with $1/N$ replaced by $1/(N-1)$). 
So 
\begin{align*}
&-\partial_t v^{N,0}-\sum_{j=0}^N \Delta_{x_j} v^{N,0} + H^0(x_0,D_{x_0}v^{N,0}, m^{N}_{\bf x}) 
+ \sum_{j\geq 1} D_{x_j}v^{N,0}\cdot D_pH(x_j,x_0, D_{x_j}v^{N,j}, m^{N, j}_{\bf x}) \\ 
& = -\partial_t U^{0}(t,x_0, m^N_{\bf x})- \Delta_{x_0} U^0(t,x_0, m^N_{\bf x})\\
& \qquad - \sum_{j=1}^N {\rm Tr} \left(\frac{1}{N^2} D^2_{mm} U^0(t,x_0, m^N_{\bf x},x_i,x_i)+ \frac{1}{N} D^2_{ym}   U^0(t,x_0, m^N_{\bf x},x_i)\right) \\
& \qquad + H^0(x_0,D_{x_0}U^{0}(t,x_0, m^N_{\bf x}), m^{N}_{\bf x}) \\ 
& \qquad + \frac{1}{N} \sum_{j\geq 1} D_{m}U^0  (t,x_0, m^N_{\bf x},x_j)
\cdot D_pH(x_j,x_0, D_{x}U(t,x_j,x_0, m^{N,j}_{\bf x}), m^{N, j}_{\bf x}) \\
& = -\partial_t U^{0}(t,x_0, m^N_{\bf x})- \Delta_{x_0} U^0(t,x_0, m^N_{\bf x}) + H^0(x_0,D_{x_0}U^{0}(t,x_0, m^N_{\bf x}), m^{N}_{\bf x}) \\ 
& \qquad +\int_{\R^d}  D_{m}U^0  (t,x_0, m^N_{\bf x},y) \cdot D_pH(y,x_0, D_{x_j}U(t,y,x_0, m^{N}_{\bf x}), m^{N}_{\bf x})m^N_{\bf x}(dy)  \\
& \qquad -\int_{\R^d} \dive_y D_mU^0 (t,x_0, m^N_{\bf x},y) m^N_{\bf x}(dy) +r^{N,0}(t,{\bf x}) =r^{N,0}(t,{\bf x})
\end{align*}
thanks to the equation satisfied by $U^0$, where 
\begin{align*}
 r^{N,0}(t,{\bf x}) & := -\sum_{j=1}^N  \frac{1}{N^2} {\rm Tr} D^2_{mm} U^0(t,x_0, m^N_{\bf x},x_i,x_i) 
%\\
%&  \qquad 
- \frac{1}{N} \sum_{j\geq 1} D_{m}U^0  (t,x_0, m^N_{\bf x},x_j) \times
\\
& 
\Big[D_pH(x_j,x_0, D_{x_j}U(t,x_j,x_0, m^{N,j}_{\bf x}), m^{N, j}_{\bf x}) 
%\\
%\\
%& \qquad \qquad \qquad \qquad \qquad \qquad 
-D_pH(x_j,x_0, D_{x_j}U(t,x_j,x_0, m^{N}_{\bf x}), m^{N}_{\bf x})  \Big]. 
\end{align*}
Note that 
\begin{align*}
\frac{1}{N} \sum_{j\geq 1}  \dk( m^{N,j}_{\bf x}, m^{N}_{\bf x}) & 
\leq  \frac{1}{N^2(N-1)} \sum_{j\geq 1} \sum_{i\neq 0, j}  |x_i-x_j| \leq \frac{C}{N} M_1(m^N_{\bf x}).
\end{align*}
So, by the regularity of $U^0$, $U$ and $H$, we have 
$$
|r^{N,0}(t,{\bf x})| \leq \frac{C}{N} (1+M_1(m^N_{\bf x})),
$$
as claimed. 

The proof for $v^{N,i}$ goes along the same lines and we omit it. 
\end{proof}

\end{document}